\theoremstyle{plain}
\newtheorem{thm}{Theorem}
\newtheorem{lem}[thm]{Lemma}
\theoremstyle{remark}
\newtheorem{rem}{Remark}
\newtheorem{ex}{Example}
\DeclareMathOperator{\rank}{rank}
\DeclareMathOperator{\vol}{vol}
\DeclareMathOperator{\supp}{supp}
\def\dd{d}
\def\cB{\mathcal{B}}
\def\cF{\mathcal{F}}
\def\sB{\mathscr{B}}
\def\sI{\mathscr{I}}
\def\la{\lambda}
\def\bra{\langle}
\def\ket{\rangle}
\def\eps{\epsilon}
\def\Z{\mathbb{Z}}
\def\R{\mathbb{R}}
\def\C{\mathbb{C}}
\def\SS{\mathbb{S}}
\def\bE{\mathbb{E}}
\def\trivial{{\mathbf 1}}
\def\conf{\mathrm{Conf}(S)}
\def\Prob{\mathbb{P}}
\def\ii{\sqrt{-1}}
\def\xila{\mathcal{X}_{\lambda}}
\def\Xilapeps{\Xi_{\lambda, p, \eps}}
\def\Klapeps{K_{\lambda, p, \eps}}
\def\cX{\mathcal{X}}
\def\balln{B_1^{(m)}}
\def\nn{m}
\def\filename{\texttt{\jobname.tex}} 
\numberwithin{equation}{section}
\title[Local universality of determinantal point processes
on Riemannian manifolds]{Local universality of determinantal point processes on
Riemannian manifolds}
\author{Makoto \textsc{Katori}}
\address{Department of Physics, Faculty of Science and Engineering, Chuo University, Kasuga, Bunkyo-ku, Tokyo 112-8551,
Japan} 
\email{katori@phys.chuo-u.ac.jp}
\author{Tomoyuki \textsc{Shirai}}
\address{Institute of Mathematics for Industry, 
Kyushu University, 744 Motooka, Nishi-ku, Fukuoka 819-0395, Japan}  
\email{shirai@imi.kyushu-u.ac.jp}
\subjclass[2010]{Primary 60G55, 60B10; Secondary 60B20, 46E22.}
\keywords{Determinantal point process on Riemannian
manifolds, Spectral projection, Local universality, Pointwise Weyl law,
Reproducing kernel, Eulcidean motion group, Bessel functions}
\begin{document}
\begin{abstract} 
We consider the Laplace-Beltrami operator $\Delta_g$ on 
a smooth, compact Riemannian manifold $(M,g)$ 
and the determinantal point process $\xila$ on $M$ associated with the spectral
 projection of $-\Delta_g$ onto the subspace corresponding to the eigenvalues up to
$\la^2$. 
We show that the pull-back of $\xila$ by the exponential map
 $\exp_p : T_p^*M \to M$ under a suitable scaling converges
 weakly to the universal determinantal point process on
 $T_p^* M$ as $\la \to \infty$. 
\end{abstract}
\maketitle
%%%%%%%%%%%%%%%%%%%%%%%%%%%%%%%%%%%%%%%%%%%%%%%%%%%%%%%%%%%%%%%%%%

\section{Introduction}
Let $(M,g)$ be a smooth, compact, Riemannian manifold
of dimension $\nn$ with no boundary.  
We fix an orthonormal basis $\{\phi_i\}_{i \ge 0}$
of eigenfunctions of 
the Laplace-Beltrami operator $\Delta_g$ acting on $L^2(M)
:= L^2(M, \vol_g)$:  
\[
 -\Delta_g \phi_i = \la_i^2 \phi_i, \quad
  \bra \phi_i, \phi_j \ket_{L^2(M)} =
  \delta_{ij},  
\]
with $0 = \la_0^2 \le \la_1^2 \le \la_2^2 \le \cdots
\nearrow + \infty$. Here $\{\la_i\}_{i=0}^{\infty}$
are the eigenvalues of $\sqrt{-\Delta_g}$. 
We denote the eigenspace corresponding to an eigenvalue $\la_i$ by
$W_{\la_i}$.  
The projection operator $E_{\la}$ on $L^2(M, \vol_g)$ onto
the closed subspace $W_{\le \la } := \bigoplus_{\la_i
\le \la} W_{\la_i}$ admits the following integral kernel 
\begin{equation}
 E_{\la}(x,y) = \sum_{\la_i \le \la} \phi_i(x)
 \overline{\phi_i(y)} \quad (x, y \in M). 
\label{eq:Elambda}
\end{equation}
The projection kernel $E_{\la}(x,y)$ is the reproducing kernel
of $W_{\le \la}$ and thus defines a determinantal
point process (DPP) $\xila$ on $M$, which
is a random simple point configuration on $M$ whose 
$n$-point correlation function with respect to $\vol_g$ is given by 
\[
 \rho_n(x_1,x_2,\dots, x_n) 
= \det( E_{\la}(x_i,x_j) )_{i,j=1}^n. 
\]
In particular, the $1$-point correlation function, the
density of points, is 
\[
 \rho_1(x) = E_{\la}(x,x). 
\]
See Section~\ref{sec:DPP} for the definition of DPP. 

The number of points in $\xila$ on $M$ is
equal to the eigenvalue counting function given by 
\[
N(\la) = \sum_{\la_i \le \la} 1 = \rank E_{\la} 
= \int_M E_{\la}(x,x) \vol_g(\dd x). 
\]
Since $\{\la_i\}_{i=0}^{\infty}$ are the eigenvalues of
$\sqrt{-\Delta_g}$, it is known as the classical Weyl law
(cf. \cite{H68}) that 
\begin{equation}
 N(\la) \sim \frac{\la^{\nn}}{(2\pi)^{\nn}} |\balln| \vol_g(M) \quad (\la
 \to \infty),  
\label{eq:classicalWeyl}
\end{equation}
where $|\balln|$ is the volume of a unit ball in $\R^{\nn}$, i.e., 
$|\balln| = \pi^{\nn/2}/\Gamma(\nn/2+1)$. 
This means that the points in $\cX_{\la}$ on $M$ become dense as $\la \to \infty$. 

\begin{ex}
When $M=\SS^1$, for every $\la>0$, the DPP associated with
$E_{\la}$ is the random eigenvalues of Circular Unitary
Ensemble (CUE) of size $N(\la)$ (cf. \cite{F10}). More generally, when $M = \SS^{\nn}$,
the corresponding DPPs are called \textit{harmonic ensembles} on $\SS^{\nn}$ 
 (cf. \cite{KS19A}). These point processes are homogeneous
 in the sense that they are invariant
 under the $\mathrm{O}(\nn)$-action. 
\end{ex}

The quantum ergodicity theorem originated by Shnirel'man
\cite{Sh74, Sh93} and also studied in \cite{CdV85, Zel87} 
states that if the geodesic flow on $M$ is ergodic then 
$N(\la)^{-1}E_{\la}(x,x) \vol_g(\dd x)$ converges weakly to
$\vol_g(\dd x)$ as $\la \to \infty$, 
in other words, so does the normalized first correlation
measure of the DPP $\xila$. 
This theorem describes the global behavior of random points of the DPP on $M$. 

In this paper, we focus on the local statistics of points in
the DPP by taking a scaling as in \eqref{eq:Xilambda} below
so that we define a DPP $\Xi_{\la,p}$ on the cotangent space 
$T_p^*M$ by taking the pull-back of
the DPP $\xila$ on $M$ by the exponential map. 

We denote the Riemannian metric on $T_p^* M$ 
by $\bra \cdot, \cdot \ket_{g_p} : T_p^* M \times T_p^*
M \to \R$ and the corresponding norm by $|\cdot|_{g_p}$.  
Here $|\xi|_{g_p}$ is the same as the principal symbol of
$\sqrt{-\Delta_g}$ locally given by 
\[
|\xi|_{g_p} = \Big(\sum_{i,j=1}^{\nn} g^{ij}(p) \xi_i \xi_j
 \Big)^{1/2}, 
\]
and $(g^{ij}(p))_{i,j=1}^{\nn}$ is the inverse matrix $g_p^{-1}$
of $g_p = (g_{ij}(p))_{i,j=1}^{\nn}$. 
The so-called pointwise Weyl law can be expressed as follows: as $\la
\to \infty$, 
\begin{align}
E_{\la}(x,x)
&= \frac{1}{(2\pi)^{\nn}} \int_{|\xi|_{g_x} < \la}
 \frac{\dd\xi}{\sqrt{\det g_x}} +
R_{\la}(x) \label{eq:weyllaw}\\
&= \frac{|\balln|}{(2\pi)^{\nn}} \la^{\nn} + R_{\la}(x) 
\nonumber
\end{align}
with uniform bound $\sup_{x \in M} |R_{\la}(x)| \le C
\la^{\nn-1}$ \cite{H68}, which leads to the classical Weyl law 
\eqref{eq:classicalWeyl}. 

Since $M$ is compact, the injectivity radius $\mathrm{inj}^*(M)$ is
positive, i.e., 
the exponential map $\exp_p : T_p^* M \to M$ is injective on
the subset $\{\xi \in T_p^*M
: |\xi|_{g_p} < \mathrm{inj}^*(M)\}$ for any $p \in M$. 
We fix a point $p \in M$ and positive $\eps < \mathrm{inj}^*(M)$. 
Let $B_{\eps}$ be the open ball of radius $\eps$ in $T_p^*
M$ centered at the origin and 
denote the image $\exp_p (B_{\eps})$ by $\cB_{p,\eps}$. 
For  $\la >0$, we define a point process $\Xilapeps$ on the cotangent space
$T_p^* M$ by 
\begin{equation}
\Xilapeps := \sum_{x \in \xila \cap \cB_{p,\eps}}
\delta_{\la \exp_p^{-1}(x)}, 
\label{eq:Xilambda} 
\end{equation}
which defines the pull-back of $\xila$ restricted on $\cB_{p,\eps}$ by the exponential
map and is scaled by $\la$. 
Here, we identified $\xila$ with a subset in $M$ (see
Section~\ref{sec:defDPP}). 
It turns out again to be a DPP on $T_p^* M$ (see Lemma~\ref{lem:DPP}).  

Our main assertion in this paper is the following. 

 \begin{thm}\label{thm:main}
 As $\la \to \infty$, the point process $\Xilapeps$
converges weakly to the DPP $\Xi_p$ on $T_p^* M$ associated 
 with the kernel 
 \begin{equation}
 K_{g_p}^{(\nn)}(u,v) 
= \frac{1}{(2\pi |u-v|_{g_p})^{\nn/2}}
 J_{\nn/2}(|u-v|_{g_p})
\label{eq:kernelKgp}  
 \end{equation}
and the reference measure $\vol_{G^{(p)}}$, where $J_{\alpha}(x)$ is the Bessel function of the first
kind defined by 
\[
 J_{\alpha}(x) = \sum_{k=0}^{\infty} \frac{(-1)^k}{k!
 \Gamma(k+\alpha+1)} \Big(\frac{x}{2}\Big)^{2k+\alpha}
\]
and $\vol_{G^{(p)}}$ is the Riemannian measure on $T_p^*M$ with
  respect to the constant Riemannian metric $G^{(p)}=(G^{(p)}_u)_{u \in T_p^*M}$
  such that $G^{(p)}_u = (\exp_p^* g)_0$ for every $u \in T_p^*M$. 
\end{thm}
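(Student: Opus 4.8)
The plan is to reduce the weak convergence of point processes to a local uniform convergence of the associated correlation kernels, and then to obtain that convergence from an off-diagonal refinement of the pointwise Weyl law \eqref{eq:weyllaw}. A determinantal point process is determined by its correlation functions, and since the kernel $E_{\la}$ is a spectral projection its correlation functions are bounded on compacta by Hadamard's inequality; hence, invoking the general principle that local uniform convergence of Hermitian, locally trace-class correlation kernels forces weak convergence of the associated DPPs, it suffices to show that the $n$-point functions of $\Xilapeps$ with respect to $\vol_{G^{(p)}}$ converge, locally uniformly in $(u_1,\dots,u_n)$, to $\det\big(K_{g_p}^{(\nn)}(u_i,u_j)\big)_{i,j=1}^n$, the $n$-point functions of $\Xi_p$.

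First I would transport the correlation functions of $\xila$ through the scaling map $\Phi_{\la}:x\mapsto \la\exp_p^{-1}(x)$. Writing $x=\exp_p(u/\la)$ and using that the $n$-point function of $\xila$ with respect to $\vol_g$ is $\det(E_{\la}(x_i,x_j))$, a change of reference measure from $(\Phi_{\la})_*\vol_g$ to $\vol_{G^{(p)}}$ produces at each point the Radon--Nikodym factor $\la^{-\nn}\sqrt{\det(\exp_p^*g)(u/\la)}/\sqrt{\det g_p}$. Splitting this factor symmetrically into the kernel, which leaves the determinants unchanged, the $n$-point function of $\Xilapeps$ equals $\det(\tilde K_{\la}(u_i,u_j))$ with
\[
\tilde K_{\la}(u,v)=\la^{-\nn}\,E_{\la}\big(\exp_p(u/\la),\exp_p(v/\la)\big)\,h_{\la}(u)^{1/2}h_{\la}(v)^{1/2},\qquad h_{\la}(u)=\frac{\sqrt{\det(\exp_p^*g)(u/\la)}}{\sqrt{\det g_p}},
\]
and $h_{\la}\to 1$ locally uniformly. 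Since $E_{\la}$ is Hermitian and $h_{\la}$ is real, $\tilde K_{\la}$ is Hermitian, and it remains only to identify its limit.

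The analytic core is the off-diagonal pointwise Weyl law: Hörmander's analysis of the spectral function gives, uniformly for $x,y$ in a fixed chart near $p$,
\[
E_{\la}(x,y)=\frac{1}{(2\pi)^{\nn}}\int_{|\xi|_{g_x}<\la}e^{\ii\bra\xi,\exp_x^{-1}(y)\ket}\,\frac{\dd\xi}{\sqrt{\det g_x}}+O(\la^{\nn-1}),
\]
which reduces to \eqref{eq:weyllaw} on the diagonal. Substituting $x=\exp_p(u/\la)$, $y=\exp_p(v/\la)$ and rescaling $\xi=\la\eta$ turns the leading term into $\la^{\nn}$ times the oscillatory integral over $\{|\eta|_{g_x}<1\}$ with phase $\la\bra\eta,\exp_x^{-1}(y)\ket$, while the remainder becomes $O(\la^{-1})$ after multiplication by $\la^{-\nn}$. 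As $\la\to\infty$ the base points collapse to $p$, so $g_x\to g_p$, the phase $\la\bra\eta,\exp_x^{-1}(y)\ket\to\bra\eta,v-u\ket$ by the first-order expansion of the exponential map (the geodesic corrections being $O(1/\la)$), and $h_{\la}\to 1$; dominated convergence then yields, locally uniformly in $(u,v)$,
\[
\lim_{\la\to\infty}\tilde K_{\la}(u,v)=\frac{1}{(2\pi)^{\nn}}\int_{|\eta|_{g_p}<1}e^{\ii\bra\eta,v-u\ket}\,\frac{\dd\eta}{\sqrt{\det g_p}}.
\]

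Finally I would evaluate this limit. Diagonalizing $g_p$ by a $g_p$-orthonormal frame makes $\sqrt{\det g_p}=1$, $|\cdot|_{g_p}=|\cdot|$, and $\vol_{G^{(p)}}$ Lebesgue measure, reducing the integral to the Fourier transform of the indicator of the Euclidean unit ball, $\frac{1}{(2\pi)^{\nn}}\int_{|\eta|<1}e^{\ii\eta\cdot w}\dd\eta=(2\pi|w|)^{-\nn/2}J_{\nn/2}(|w|)$ with $w=v-u$, which is precisely $K_{g_p}^{(\nn)}(u,v)$ of \eqref{eq:kernelKgp} once $|w|$ is read as $|u-v|_{g_p}$. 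Together with Lemma~\ref{lem:DPP} and the convergence criterion this proves the theorem. The main obstacle is the analytic input of the third display: one needs the off-diagonal Weyl asymptotic with a remainder uniform as the two base points range over the shrinking $O(1/\la)$-neighborhood of $p$, together with quantitative control of the approximations $g_x\approx g_p$ and $\exp_x^{-1}(y)\approx(v-u)/\la$. Verifying that all these errors are genuinely $o(1)$ after dividing by $\la^{\nn}$ is the delicate step, whereas the final identification of the limit as the Euclidean-motion-invariant Bessel kernel is then routine.
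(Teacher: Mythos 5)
Your proposal is correct and follows essentially the same route as the paper's proof: identify the scaled process as a DPP via the pull-back (the paper's Lemma~\ref{lem:DPP}), feed in the off-diagonal pointwise Weyl law with the adapted phase $\langle \exp_y^{-1}(x),\xi\rangle_{g_y}$ and the uniform $O(\lambda^{\nn-1})$ remainder on $d_g(x,y)<r/\lambda$ (Theorem~\ref{thm:estimate}), identify the limit kernel with the Fourier transform of the unit-ball indicator (Lemma~\ref{lem:fourier-sphere}), and conclude by the kernel-convergence criterion for weak convergence of DPPs (Lemma~\ref{lem:weakconv}). The only cosmetic differences are that you fold the reference-measure density into the kernel as a gauge factor and pass to the limit inside the oscillatory integral, whereas the paper evaluates that integral exactly at finite $\lambda$ as $(2\pi)^{-\nn/2}F_{\nn/2}(\lambda\, d_g(U_\lambda,V_\lambda))$ and then invokes the scaled-distance limit $\lambda\, d_g(U_\lambda,V_\lambda)\to|u-v|_{g_p}$ (Lemma~\ref{lem:limit}) together with continuity of $F_{\nn/2}$ --- the same geometric and analytic inputs in a slightly different order.
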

We remark that the limiting DPP $\Xi_p$ does not
depend on $\eps>0$.

We consider the following correlation kernel on $\R^m$, 
\begin{align*}
K^{(\nn)}(u,v)
&:= \frac{1}{(2\pi |u-v|)^{\nn/2}}
 J_{\nn/2}(|u-v|) \\
&= \frac{1}{(2\pi)^{\nn}} 
\int_{|\xi|<1}  e^{\ii (u-v, \xi)} \dd\xi, 
\end{align*}
where $(\cdot, \cdot)$ (resp. $|\cdot |$) is the standard
inner product (resp. norm) on $\R^{\nn}$. The DPP on $\R^m$ associated with $K^{(\nn)}(u,v)$ 
is invariant under the action of the Euclidean motion
group. When $n=1$, $K^{(1)}(u,v)$ coincides with the sinc kernel 
\[
 K^{(1)}(u,v) = \frac{\sin (u-v)}{\pi (u-v)},  
\]
which is the reproducing kernel of the classical
Paley-Wiener space (see also Example~\ref{ex:RKHS} for $K^{(\nn)}$ given in Section~\ref{sec:DPPandrkhs}). 
It is well known that 
the point process of eigenvalues of CUE (also GUE) under
suitable scaling converges to the  
DPP associated with the sinc kernel. 
This DPP is also one of the most important examples of the
class of DPPs associated with de Branges spaces discussed in \cite{BS17}. 
In \cite{KS19A}, we proved a special case of Theorem~\ref{thm:main} when $M = \SS^{\nn}$ by using spherical harmonics. 
Theorem~\ref{thm:main} can be regarded as a generalization of these results to compact Riemannian manifolds. 
For the proof of Theorem~\ref{thm:main}, the pointwise Weyl law \eqref{eq:weyllaw} plays a central role.  

Theorem~\ref{thm:main} shows the local universality of DPPs on Riemannian
manifolds. This type of universality has been discussed 
as the asymptotic local structure of Szeg\H{o} kernels, which is
used to analyze random spherical harmonics and random section
of holomorphic line bundles over a compact K\"{a}hler
manifold. The former corresponds to the Euclidean class
(real case) while the latter does the Heisenberg class (complex case)
(cf. \cite{BSZ00, Zel00, Zel09}). The terms ``Euclidean'' and ``Heisenberg''
are related to representations of the Euclidean and Heisenberg motion groups. 
The result in this paper falls in the
Euclidean class in this terminology. 

Theorem~\ref{thm:main} can also be generalized to the case where the
spectral projections of Laplace-Beltrami operators are replaced by those of 
general elliptic operators. 

\section{Determinantal point processes}\label{sec:DPP}

For the necessary background for determinantal point
   processes, see e.g. \cite{Macchi, ShirTaka0,
   ShirTaka1, ShirTaka2, Soshnikov, HoughEtAl,KS19B}.

\subsection{Definition}\label{sec:defDPP}

Let $S$ be a locally compact Hausdorff space with countable
base. A configuration $\Xi$ on $S$ is a non-negative
integer-valued Radon measure and it can be expressed as $\Xi
= \sum_{i} \delta_{x_i} \ (x_i \in S)$. 
We denote by $\conf$ the totality of configurations on 
$S$, which we call a configuration space over $S$. 
An element $\Xi$ of $\conf$ is sometimes 
regarded as an at most countable subset in $S$ without
accumulation, possibly with multiple points. Thus, $\Xi(A)$
is equal to
the number of points in $A \in \sB(S)$ with counted
multiplicity, 
where $\sB(S)$ is the totality of all bounded (i.e.,
relatively compact) sets in $S$. 
The configuration space $\conf$ equipped with vague topology
turns out to be a Polish space, i.e, a complete, separable
metrizable space. 
We equip the configuration space $\conf$ with the Borel
structure with respect to this topology, which coincides with the
Borel structure generated by the mapping $\conf \ni \Xi \mapsto \Xi(A) \in
S$ for all bounded $A \in \sB(S)$. 
A point process on $S$ is a $\conf$-valued random variable 
$\Xi = \Xi_{\omega}$ 
defined on a probability space $(\Omega, \cF, \Prob)$. 
If $\Xi(\{x\}) \le 1$ for every $x \in S$ a.s., then $\Xi$ is called a
simple point process. In this case, 
by identifying $\Xi$ with its support, 
we use the notation $x \in \Xi$ meaning that $\Xi(\{x\})=1$. 

We fix a Radon measure $\nu$ on $S$ as a reference measure. 
A symmetric measure $\nu_n$ on $S^n$
is called the $n$-th {\it correlation measure} if it satisfies
\[
\bE 
\left[
\prod_{i=1}^p \frac{\Xi(A_i)!}
{(\Xi(A_i)-k_i)!} \right]
=\nu_n(A_1^{k_1} \times \cdots
\times A_p^{k_p})
\]
for any disjoint bounded sets $A_1, \dots, A_p \in \sB(S)$ and
any $k_1, \dots, k_p \in \Z_{\ge 0} := \{0,1,2,\dots\}$  
with $\sum_{i=1}^p k_i = n$. 
If $\nu_n$ is absolutely continuous 
with respect to the product measure $\nu^{\otimes n}$,
the Radon-Nikodym derivative
$\rho_n := d\nu_n/ d\nu^{\otimes n}$ is called 
the {\it $n$-point correlation function}
with respect to the reference measure $\nu$;
\[
\nu_n(dx_1 \dots dx_n)
=\rho^n(x_1, \dots, x_n) 
\nu^{\otimes n}(dx_1 \dots dx_n).
\]

Let $\sI_{1}(S,\nu)$ be the ideal of trace class operators
$K \colon L_2(S,\nu)\to L_2(S,\nu)$; we denote the 
$\sI_{1}$-norm of the operator $K$ by $||K||_{{\mathscr
I}_{1}}$.
Let  $\sI_{1,  \mathrm{loc}}(S,\nu)$ be the space of operators
$K\colon L_2(S,\nu)\to L_2(S,\nu)$ 
such that $\trivial_A K \trivial_A\in{\mathscr I}_1(S,\nu)$
for any bounded Borel subset $A \subset S$, 
where $\trivial_A$ is the indicator function of a set $A$. 
Such an operator $K$ is called a locally trace class
operator. 
We endow the space ${\mathscr I}_{1, \mathrm{loc}}(S,\nu)$
with a countable family of semi-norms
$\|\trivial_A K \trivial_A \|_{{\mathscr I}_1}$
where $A$ runs through an exhausting family $A_n$ of bounded
sets, i.e., $A_n$ is increasing and $\bigcup_{n=1}^{\infty} A_n = S$. 
A locally trace class operator $K$ admits a kernel
(cf. \cite{GY05, KS19B}), 
for which, slightly abusing notation, we use the same symbol $K$.

A point process is called a \textit{determinantal point
process associated with $K$ and $\nu$}  
if there exists an operator
$K\in{\mathscr I}_{1,  \mathrm{loc}}(S,\nu)$ such that for
any bounded measurable function $h$, for which $h-1$ is
supported in a bounded set $A$, 
we have
\begin{equation}
\label{eq1}
\bE \Psi_h 
=\det\biggl(1+(h-1)K \trivial_A \biggr),
\end{equation}
where $\Psi_h(\Xi) = \prod\limits_{x \in \Xi} h(x)$ 
for $\Xi \in \conf$.
The Fredholm determinant in~\eqref{eq1} is well-defined since
$K\in \sI_{1, \mathrm{loc}}(S,\nu)$. For example, if $K$ is a
positive contraction operator $K \in \sI_{1,
\mathrm{loc}}(S,\nu)$, then there exists a DPP associated
with $K$ and $\nu$. 
The equation (\ref{eq1}) determines the law of the DPP uniquely (\cite{ShirTaka0, ShirTaka1, Soshnikov}). 
For the DPP associated with $K$, the $n$-th correlation
function with respect to $\nu$ is given by 
\[
 \rho_n(x_1,\dots, x_n) = \det(K(x_i,x_j))_{i,j=1}^n. 
\]
$K(x,y)$ is often called the correlation kernel and $\nu$ the reference measure. 
When $S=\R^{\nn}$, if $\nu$ is the Lebesgue measure and $K(x,y) = k(x-y)$ for
some $k$, then the law of the DPP associated with $K$ and
$\nu$ is invariant under the action of the Euclidean motion group. 

Weak convergence for DPPs is characterized by the
convergence of operators (cf. Proposition 3.10 in
\cite{ShirTaka1}) as follows.     
\begin{lem}\label{lem:weakconv}
Let $\Xi_n$ (resp. $\Xi$) be a DPP on $S$ associated with $K_n$
 (resp. $K$) and $\nu$. 
Suppose $K_n$ converges $K$ in $\sI_{1, \mathrm{loc}}(S,\nu)$ as
 $n \to \infty$. 
Then $\Xi_n$ converges weakly to $\Xi$ as $n \to
 \infty$. In particular, if the kernel $K_n(x,y)$ converges to $K(x,y)$
 uniformly on any compact set in $S \times S$, then the convergence
 above takes place. 
\end{lem}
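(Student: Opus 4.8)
The plan is to characterize weak convergence of point processes through their Laplace functionals and to recognize, via the defining identity \eqref{eq1}, that these functionals are Fredholm determinants depending continuously on the kernel in the trace norm (cf. \cite{ShirTaka1}). I would use that $\Xi_n \to \Xi$ weakly if and only if
\[
\bE\Big[\exp\Big(-\int_S f \, d\Xi_n\Big)\Big] \longrightarrow \bE\Big[\exp\Big(-\int_S f \, d\Xi\Big)\Big]
\]
for every $f$ in the cone $C_c^+(S)$ of non-negative continuous functions of compact support. Writing $h := e^{-f}$, one has $\Psi_h(\Xi) = \prod_{x \in \Xi} h(x) = \exp(-\int_S f\, d\Xi)$, so both sides above are exactly $\bE \Psi_h$ for $\Xi_n$ and for $\Xi$. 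Since $0 < h \le 1$ and $h-1$ is supported in the bounded set $A := \supp f$, the generating-functional identity \eqref{eq1} applies, and the task reduces to showing that the corresponding Fredholm determinants converge.

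First I would establish the principal implication. Fixing $f$ and setting $A := \supp f$, $h := e^{-f}$, I note that $h-1$ vanishes off $A$, so as multiplication operators $(h-1) = (h-1)\trivial_A$ and hence $(h-1)K_n \trivial_A = (h-1)\,\trivial_A K_n \trivial_A$, and likewise for $K$. By hypothesis $\trivial_A K_n \trivial_A \to \trivial_A K \trivial_A$ in $\sI_1(S,\nu)$, and multiplication by the bounded function $h-1$ is $\sI_1$-continuous with $\|(h-1)T\|_{\sI_1} \le \|h-1\|_\infty \|T\|_{\sI_1}$; therefore $(h-1)K_n\trivial_A \to (h-1)K\trivial_A$ in $\sI_1$. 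I would then invoke the local Lipschitz continuity of the Fredholm determinant on $\sI_1$,
\[
|\det(1+T_1) - \det(1+T_2)| \le \|T_1 - T_2\|_{\sI_1}\,\exp\bigl(1 + \|T_1\|_{\sI_1} + \|T_2\|_{\sI_1}\bigr),
\]
with $T_1 = (h-1)K_n\trivial_A$ and $T_2 = (h-1)K\trivial_A$, so that \eqref{eq1} yields $\bE\Psi_h$ for $\Xi_n$ converging to $\bE\Psi_h$ for $\Xi$. As $f \in C_c^+(S)$ is arbitrary, the Laplace functionals converge and $\Xi_n \to \Xi$ weakly.

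For the ``in particular'' clause I would reduce matters to proving $\trivial_A K_n \trivial_A \to \trivial_A K \trivial_A$ in $\sI_1$ for each bounded Borel $A$, then upgrade uniform kernel convergence to trace-norm convergence using self-adjointness and positivity (the relevant $K_n$ are reproducing kernels of subspaces of $L^2$, hence positive contractions, as is the limit $K$). Since $A$ is bounded, $\overline A$ is compact and $\nu(A) < \infty$, so uniform convergence of $K_n \to K$ on $\overline A \times \overline A$ gives, on one hand, Hilbert--Schmidt convergence
\[
\|\trivial_A(K_n - K)\trivial_A\|_{\mathrm{HS}}^2 = \int_A \int_A |K_n(x,y) - K(x,y)|^2 \, \nu(dx)\,\nu(dy) \le \nu(A)^2 \sup_{A \times A} |K_n - K|^2 \to 0,
\]
hence operator-norm (so strong) convergence of $\trivial_A K_n \trivial_A$ to $\trivial_A K \trivial_A$, and on the other hand convergence of traces
\[
\mathrm{tr}(\trivial_A K_n \trivial_A) = \int_A K_n(x,x)\,\nu(dx) \longrightarrow \int_A K(x,x)\,\nu(dx) = \mathrm{tr}(\trivial_A K \trivial_A)
\]
from uniform convergence on the diagonal. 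For positive trace-class operators, strong convergence together with convergence of $\sI_1$-norms forces trace-norm convergence (the non-commutative analogue of Scheff\'e's lemma, i.e. Gr\"umm's theorem), giving $\trivial_A K_n \trivial_A \to \trivial_A K \trivial_A$ in $\sI_1$, that is $K_n \to K$ in $\sI_{1,\mathrm{loc}}$, whence the first part applies.

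The hard part will be precisely this last upgrade: uniform convergence of integral kernels on compacts does \emph{not} by itself yield trace-norm convergence of the associated operators, and passing through Hilbert--Schmidt convergence plus trace convergence genuinely relies on positivity/self-adjointness via the Gr\"umm-type theorem. Some additional care will be needed to justify $\mathrm{tr}(\trivial_A K \trivial_A) = \int_A K(x,x)\,\nu(dx)$, a Mercer-type identity valid here because the restricted operators are trace class with continuous kernels.
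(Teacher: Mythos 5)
Your argument is correct, and it is essentially the proof behind the paper's treatment of this statement: the paper does not prove Lemma~\ref{lem:weakconv} itself but cites Proposition 3.10 of \cite{ShirTaka1}, and the argument there is precisely your first part --- reduce weak convergence to convergence of the Laplace functionals $\bE\Psi_h$ with $h=e^{-f}$, $f \in C_c^+(S)$, observe $(h-1)K_n\trivial_A=(h-1)\trivial_A K_n\trivial_A$ so that $\sI_{1,\mathrm{loc}}$-convergence controls the relevant operators in trace norm, and conclude by the local Lipschitz continuity of $T\mapsto\det(1+T)$ on $\sI_1$. Your treatment of the ``in particular'' clause (Hilbert--Schmidt convergence on $A\times A$, convergence of traces through the diagonal, then the Gr\"umm-type theorem for positive operators) is likewise the standard justification, and you correctly isolate the one genuine subtlety: uniform convergence of kernels alone does not control the trace norm, so this upgrade uses the Hermitian positive-contraction and kernel-continuity hypotheses implicit in the Shirai--Takahashi framework --- hypotheses satisfied in the paper's application, where the kernels are continuous spectral projection kernels. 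I see no gaps.
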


\subsection{DPPs associated with reproducing kernel Hilbert spaces}\label{sec:DPPandrkhs}

Let $S$ be a non-empty subset and $\cF(S)$ be a linear space
of functions on $S$, i.e., $\cF(S) := \{f : S \to \C\}$. 
A subspace $H$ of $\cF(S)$ is called a reproducing
kernel Hilbert space (RKHS) if $H$ is endowed with an inner product
$\bra \cdot, \cdot \ket_H$ which makes $H$ a Hilbert space 
and the evaluation functional $E_s : H \to \C$ defined by 
$E_s(f) := f(s)$ is bounded for every $s \in S$. 
By the Riesz representation theorem, for each $s \in S$, there
exists a unique element $k_s \in H$ 
such that $E_s(f) = \bra f, k_s \ket_H = f(s)$. 
We define a kernel $K : S \times S \to \C$ by
\[
 K(s,t) := k_t(s) = \bra k_t, k_s \ket_H, 
\]
which is called the reproducing kernel for $H$ 
(see \cite{Aro50} for more details about RKHS).
The integral operator $K$ with kernel $K(s,t)$ 
defines an orthogonal projection onto $H$. 
Therefore, the DPP is associated with reproducing kernel
$K(s,t)$, or equivalently, RKHS $H$.  
\begin{ex}\label{ex:RKHS}
(1) For a given $a>0$, 
\[
 \mathrm{PW}_a := \{f \in C(\R) : \supp \widehat{f} \subset [-a,a]\}
\]
is called a Paley-Wiener space or the space of band-limited
 functions. Here $\widehat{f}$ is the Fourier transform of
 $f$ defined as 
\[
\widehat{f}(\xi) := \int_{\R^{\nn}} f(x)e^{-\sqrt{-1}(x, \xi)} dx.  
\]
The corresponding reproducing kernel $K_a$ is given by 
\[
 K_a(x,y)= \frac{\sin a(x-y)}{\pi(x-y)}
\]
and the corresponding DPP is the limiting DPP obtained from
 CUE (also GUE) eigenvalues. \\ 
(2) A generalized Paley-Wiener space is similarly defined as
 follows: for a bounded Borel set $\Omega \subset \R^{\nn}$, 
\[
 \mathrm{PW}_{\Omega} := \{f \in C(\R^{\nn}) : \supp \widehat{f}
 \subset \overline{\Omega}\}. 
\]
When $\Omega = \balln \subset \R^{\nn}$, the corresponding
 reproducing kernel is $K^{(\nn)}(x,y)$ which appeared in 
Theorem~\ref{thm:main}. \\
(3) Let $(M,g)$ be a compact, smooth, Riemannian manifold
 and $\Delta_g$ be the Laplace-Beltrami operator on $L^2(M, \vol_g)$. 
We denote the resolution of the identity for $\Delta_g$ 
by $\{E(A) : A \in \sB(\R)\}$. 
Then the integral operator $E_{\la}$ with kernel $E_{\la}(x,y)$ 
given in \eqref{eq:Elambda} 
coincides with the projection $E([0, \la^2])$ and $W_{\le \la}$ 
turns out to be a RKHS admitting the reproducing kernel
 $E_{\la}(x,y)$.  
\end{ex}

\section{Proof of the main theorem} \label{sec:proof}
We define $\phi_{\la} : T_p^* M \to M$ by $\phi_{\la}(u) =
 \exp_p(u/\la)$ for $u \in T_p^* M$. 
For $u, v \in T_p^* M$, 
we write $U_{\la} = \phi_{\la}(u)$ and $V_{\la} = \phi_{\la}(v)$. 
We consider the kernel 
\begin{equation}
\Klapeps(u,v) = \frac{1}{\la^{\nn}} E_{\la}(U_{\la},
 V_{\la})
\trivial_{\cB_{p,\eps}}(U_{\la})
\trivial_{\cB_{p,\eps}}(V_{\la}). 
\label{eq:Klambda} 
\end{equation}
We have the following. 
\begin{lem}\label{lem:DPP}
The scaled point process $\Xilapeps$ defined by \eqref{eq:Xilambda} is the DPP on $T_p^* M$ associated with the kernel
$\Klapeps(u,v)$ of \eqref{eq:Klambda} and $\la^m \phi_{\la}^* \vol_g$. 
\end{lem}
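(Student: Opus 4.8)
The plan is to realize $\Xilapeps$ as the image of $\xila$ under an explicit change of variables and to transport the determinantal structure through a unitary conjugation. Writing $\phi_{\la}(u)=\exp_p(u/\la)$ and $T=\phi_{\la}^{-1}=\la\exp_p^{-1}$, the process $\Xilapeps$ is exactly the pushforward under $T$ of $\xila$ restricted to $\cB_{p,\eps}$. Since $\eps<\mathrm{inj}^*(M)$, the map $\phi_{\la}$ restricts to a diffeomorphism of $W_{\la}:=\{u\in T_p^*M:|u|_{g_p}<\la\eps\}$ onto $\cB_{p,\eps}$, so this pushforward is clean (no folding) and preserves simplicity. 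Rather than invoking separate ``restriction'' and ``pushforward'' statements for DPPs, I would absorb both the restriction to $\cB_{p,\eps}$ and the scaling into the Laplace-functional identity \eqref{eq1}.

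First I would introduce the unitary $\tilde U : L^2(\cB_{p,\eps},\vol_g)\to L^2(W_{\la},\la^{\nn}\phi_{\la}^*\vol_g)$ defined by $(\tilde Uf)(u)=\la^{-\nn/2}f(\phi_{\la}(u))$, and verify it is unitary via the change-of-variables identity $\int_{W_\la}(g\circ\phi_\la)\,d(\phi_\la^*\vol_g)=\int_{\cB_{p,\eps}}g\,d\vol_g$ for the pullback measure; here the weight $\la^{\nn}$ in the target is compensated exactly by the scalar $\la^{-\nn/2}$. The crucial computation is the conjugation of the compressed projection: a direct change of variables shows that $\tilde U\,(\trivial_{\cB_{p,\eps}}E_{\la}\trivial_{\cB_{p,\eps}})\,\tilde U^{-1}$ is the integral operator on $L^2(W_{\la},\la^{\nn}\phi_{\la}^*\vol_g)$ with kernel $\la^{-\nn}E_{\la}(\phi_{\la}(u),\phi_{\la}(v))$, which, once the indicator factors $\trivial_{\cB_{p,\eps}}(\phi_\la(u))=\trivial_{W_\la}(u)$ are restored, is precisely $\Klapeps(u,v)$ of \eqref{eq:Klambda}. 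Because $E_{\la}$ is a finite-rank orthogonal projection (of rank $N(\la)$), its compression is a positive, trace-class contraction, and unitary equivalence shows $\Klapeps$ is one as well; hence $\Klapeps$ genuinely defines a DPP on $T_p^*M$ with reference measure $\la^{\nn}\phi_{\la}^*\vol_g$.

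It then remains to identify this DPP with $\Xilapeps$, which I would do through \eqref{eq1}. Given bounded measurable $H$ with $H-1$ supported in a bounded set $A'\subset T_p^*M$, set $h:=(H\circ T)\trivial_{\cB_{p,\eps}}+\trivial_{M\setminus\cB_{p,\eps}}$ on $M$; then $h-1$ is supported in the relatively compact set $A:=T^{-1}(A')\cap\cB_{p,\eps}$, and one has the pointwise identity $\Psi_H(\Xilapeps)=\Psi_h(\xila)$, since points of $\xila$ outside $\cB_{p,\eps}$ contribute a factor $1$. Taking expectations and applying \eqref{eq1} to $\xila$ gives $\bE\,\Psi_H(\Xilapeps)=\det(1+(h-1)E_{\la}\trivial_A)$ on $L^2(M,\vol_g)$. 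Because $A\subseteq\cB_{p,\eps}$ and $h-1$ is supported in $\cB_{p,\eps}$, this determinant reduces to one on $L^2(\cB_{p,\eps},\vol_g)$ and may be conjugated by $\tilde U$; using that $\tilde U$ carries multiplication by $h-1$ to multiplication by $(h-1)\circ\phi_\la=H-1$ and multiplication by $\trivial_A$ to multiplication by $\trivial_{A'}$, together with the kernel computation above, the right-hand side becomes $\det(1+(H-1)\Klapeps\trivial_{A'})$ on $L^2(T_p^*M,\la^{\nn}\phi_\la^*\vol_g)$. This is exactly the defining identity for the DPP associated with $\Klapeps$ and $\la^{\nn}\phi_{\la}^*\vol_g$, so by the uniqueness of the law determined by \eqref{eq1} the two processes coincide.

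The step I expect to require the most care is the measure-theoretic bookkeeping: checking that the pullback $\phi_{\la}^*\vol_g$ transforms integrals correctly, and tracking how the $\la^{\nn}$ rescaling of the reference measure forces the compensating $\la^{-\nn}$ in front of the kernel (equivalently, that $(K,\nu)$ and $(c^{-1}K,c\nu)$ with $c=\la^{\nn}$ define the same DPP). The remaining ingredients---unitarity of $\tilde U$, invariance of the Fredholm determinant under unitary conjugation, and the reduction of the determinant from $L^2(M)$ to $L^2(\cB_{p,\eps})$ using that $h-1$ and $\trivial_A$ live in $\cB_{p,\eps}$---are routine.
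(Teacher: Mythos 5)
Your proposal is correct, but it is organized differently from the paper's proof. The paper disposes of the lemma in a few lines by chaining three closure properties of DPPs, each invoked as a known fact: (i) the restriction of a DPP to $\cB_{p,\eps}$ is the DPP with kernel $E_{\la}(x,y)\trivial_{\cB_{p,\eps}}(x)\trivial_{\cB_{p,\eps}}(y)$; (ii) the pull-back under the diffeomorphism $\phi_{\la}|_{B_{\eps}}$ is the DPP with the pulled-back kernel and reference measure $\phi_{\la}^*\vol_g$; (iii) the pair $(K,\nu)$ may be traded for $(c^{-1}K, c\nu)$ with $c=\la^{\nn}$, citing \cite[Section 2.3]{KS19B}. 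You instead prove all three operations in a single stroke directly from the defining identity \eqref{eq1}: the unitary $\tilde U$ with the $\la^{-\nn/2}$ normalization simultaneously implements the diffeomorphism and the measure rescaling, the kernel conjugation computation reproduces \eqref{eq:Klambda} exactly, and the pointwise identity $\Psi_H(\Xilapeps)=\Psi_h(\xila)$ together with invariance of the Fredholm determinant under unitary conjugation and uniqueness of the law determined by \eqref{eq1} closes the argument. What your route buys is self-containedness: it makes explicit why the compressed kernel is a positive trace-class contraction (finite rank of $E_{\la}$), why the $\la^{-\nn}$ in the kernel is forced by the $\la^{\nn}$ in the measure, and why the determinant on $L^2(M)$ collapses to one on $L^2(\cB_{p,\eps})$; what the paper's route buys is brevity, since the cited facts are standard. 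One bookkeeping point in your argument deserves a sentence if written out: $\tilde U$ carries $\trivial_A$ to $\trivial_{A'\cap W_{\la}}$ rather than $\trivial_{A'}$ (where $W_{\la}=\la B_{\eps}$), but this is harmless because the indicator factors built into $\Klapeps$ give $\Klapeps\trivial_{A'}=\Klapeps\trivial_{A'\cap W_{\la}}$, so the two Fredholm determinants coincide.
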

\begin{proof}
We note that $\cX_{\la}|_{\cB_{p,\eps}}$ is the DPP
 associated with
the kernel $E_{\la}(x,y)
 \trivial_{\cB_{p,\eps}}(x)\trivial_{\cB_{p,\eps}}(y)$ and
 the reference measure $\vol_g$. Then the pull-back $\phi_{\la}^* \cX_{\la}|_{{\cB_{p,\eps}}}$ is the DPP
 associated with the kernel $E_{\la}(\phi_{\la}(u),\phi_{\la}(v)) 
\trivial_{\cB_{p,\eps}}(\phi_{\la}(u))\trivial_{\cB_{p,\eps}}(\phi_{\la}(v))$
 and $\phi_{\la}^* \vol_g$ since $\phi_{\la}|_{B_{\eps}} : B_{\eps} \to
 \cB_{p,\eps}$ is a diffeomorphism. 
The law of this DPP is the same as that of the DPP
 associated with the kernel \eqref{eq:Klambda} 
and $\la^m \phi_{\la}^* \vol_g$ through the 
measure change by the factor $\la^m$ (cf. \cite[Section 2.3]{KS19B}). 
\end{proof}
We remark that since $(d\phi_{\la})_u = \la^{-1} (d\phi_1)_{u/\la}$, 
the pull-back of the Riemannian metric $g$ on $M$ is expressed as 
\begin{align*}
\la^2 (\phi_{\la}^*g)_u 
= (\phi_1^* g)_{u/\la}
\end{align*}
for $u \in T_p^*M$. 
Therefore, $\la^m \phi_{\la}^* \vol_g$ is equal to 
the Riemannian measure with respect to $(\phi_1^*g)_{\cdot/\la}$. 
For the proof of Theorem~\ref{thm:main}, we appeal to the pointwise Weyl
law \eqref{eq:weyllaw}, which gives an off-diagonal asymptotics for the
projection kernel $E_{\la}(x,y)$ as $\la \to \infty$ as
follows: 
if $x$ is close enough to $y$, i.e., 
$x \in \cB_{y,\eps}$ with $\eps < \mathrm{inj}^*(M)$, 
then 
\begin{align}
E_{\la}(x,y)
&= \frac{\la^{\nn}}{(2\pi)^{\nn}} \int_{|\xi|_{g_y} < 1} 
e^{\ii \la \psi(x,y,\xi)} 
\frac{\dd\xi}{\sqrt{\det g_y}} 
+ R_{\la}(x,y), 
\label{eq:integral} 
\end{align}
where $\psi(x,y,\xi)$ is a phase function which is adapted, 
in H\"{o}rmander's terminology \cite{H68}, 
to the principal symbol $|\xi|_{g_y}$ of $\sqrt{-\Delta_g}$,  
vanishing on the diagonal $x=y$. 
This type of asymptotics for the spectral function was
initiated by H\"ormander \cite{H68} as an application of the
theory of pseudo-differential operators and recovers the
classical Weyl law \eqref{eq:classicalWeyl}. 
The choice of a phase function is not unique, and one can
take 
\begin{equation}
 \psi(x,y,\xi) = \bra \exp_y^{-1}(x), \xi \ket_{g_y} 
\label{eq:phase-function}
\end{equation}
in a coordinate-independent way \cite{Zel09b, CH15}. 
Indeed, the integral on the right-hand side of
\eqref{eq:integral} 
with \eqref{eq:phase-function} is taken over the cotangent fiber $T_y^*M$ and 
it is coordinate-independent since the measure $\dd\xi /\sqrt{\det g_y}$ is the quotient of the canonical symplectic form
$\dd\xi \wedge dy$ on $T^*M$ by the Riemannian volume form $\sqrt{\det g_y} dy$ on $M$. 
There are many papers estimating the remainder term
$R_{\la}(x,y)$. 
From \cite[Theorem 2]{CH15}, the remainder term is uniformly estimated as
follows. 
\begin{thm}[\cite{H68, CH15, CH18}]\label{thm:estimate} 
We assume \eqref{eq:phase-function}. Then, 
for any fixed $r > 0$, as $\la \to \infty$, 
\[
\sup_{d_g(x,y) < r/ \la} 
| R_{\la}(x,y) | = O(\la^{\nn-1}), 
\]
where $d_g(x,y)$ is the Riemannian distance. 
\end{thm}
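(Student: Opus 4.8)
The plan is to prove the uniform off-diagonal remainder bound by H\"ormander's half-wave (hyperbolic) method, localized to short propagation times. The starting point is that the spectral measure $d_{\la} E_{\la}(x,y)$ is the inverse Fourier transform, in the dual time variable $t$, of the kernel of the half-wave group $U(t) = e^{\ii t \sqrt{-\Delta_g}} = \sum_j e^{\ii t \la_j}\phi_j(x)\overline{\phi_j(y)}$. The strategy is to understand $U(t)$ for small $|t|$ via a parametrix, read off the smoothed spectral density, and then recover the unsmoothed $E_{\la}$ by a Tauberian argument; both the sharp order $O(\la^{\nn-1})$ and the uniformity in the off-diagonal parameter are built into this scheme.

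First I would fix a cutoff $\chi \in C_c^{\infty}(\R)$ with $\chi(0)=1$ and $\widehat{\chi}$ supported in $(-T,T)$, where $T$ is smaller than the injectivity radius of $M$. Convolving $d_{\la}E_{\la}$ with $\widehat{\chi}$ then only involves $U(t)$ for $|t| < T$, where finite propagation speed and the Lax--H\"ormander parametrix represent the kernel as an oscillatory integral
\[
U(t)(x,y) = \frac{1}{(2\pi)^{\nn}} \int_{T_y^*M} a(t,x,y,\xi)\, e^{\ii(\psi(x,y,\xi) - t|\xi|_{g_y})}\, \frac{d\xi}{\sqrt{\det g_y}} + r_{\infty}(t,x,y),
\]
with $\psi$ the adapted phase of \eqref{eq:phase-function}, $a \sim 1 + O(|\xi|_{g_y}^{-1})$ a classical symbol, and $r_{\infty}$ smooth in all variables. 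Integrating in $t$ against $\widehat{\chi}(t)\,e^{-\ii t\la}$ localizes the frequencies to the shell $|\xi|_{g_y} \approx \la$ by stationary phase, which after integration in $\la$ reproduces the leading term $\frac{\la^{\nn}}{(2\pi)^{\nn}}\int_{|\xi|_{g_y}<1} e^{\ii\la\psi(x,y,\xi)}\, d\xi/\sqrt{\det g_y}$ of \eqref{eq:integral}, with a smoothed remainder of order $O(\la^{\nn-1})$, uniform for $(x,y)$ in a compact neighborhood of the diagonal.

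The remaining step is a Tauberian argument: since $d_{\la}E_{\la}(x,x)$ is a nonnegative measure, H\"ormander's Tauberian theorem upgrades the smoothed asymptotics into the pointwise statement for $E_{\la}(x,x)$ itself with the same $O(\la^{\nn-1})$ loss, and the off-diagonal kernel is then controlled by Cauchy--Schwarz from the diagonal together with a direct estimate of the oscillatory integral. The uniformity over $d_g(x,y) < r/\la$ is exactly where the scaling helps: on this set $\la\, d_g(x,y) < r$ is bounded, so the phase $\la\psi(x,y,\xi) = \la\bra \exp_y^{-1}(x),\xi\ket_{g_y}$ ranges over a bounded set and all of its $\xi$-derivatives stay uniformly bounded, whence the stationary-phase remainders do not deteriorate as $y \to x$. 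Compactness of $M$ then promotes the local estimates to a single global constant.

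The main obstacle is ensuring that only the short-time parametrix is needed, so that the global geodesic dynamics of $M$ -- conjugate points, closed geodesics, and the attendant singularities of $U(t)$ at $|t| \ge T$ -- never enter the sharp remainder. This is precisely what the cutoff $\widehat{\chi}$ achieves, reducing everything to the non-degeneracy of the short-time phase (its adaptedness to $|\xi|_{g_y}$) and the smoothness of the amplitude. The genuinely delicate point, and the one refined in \cite{CH15, CH18} beyond the diagonal estimate of \cite{H68}, is the bookkeeping that makes the stationary-phase expansion uniform in the off-diagonal parameter over the full range $d_g(x,y) < r/\la$; I expect this uniform tracking of the amplitude's dependence on $(x,y)$, rather than any new geometric input, to be the crux.
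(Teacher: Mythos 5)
The paper offers no proof of Theorem~\ref{thm:estimate}: it is quoted directly from the literature, cited as \cite{H68, CH15, CH18} (specifically Theorem~2 of \cite{CH15}), so there is no internal argument to compare yours against. Your sketch --- short-time half-wave parametrix localized below the injectivity radius, stationary phase to extract the leading oscillatory integral, a Tauberian argument with Cauchy--Schwarz handling the off-diagonal terms, and uniformity over $d_g(x,y)<r/\lambda$ coming from the boundedness of the rescaled phase --- correctly reconstructs the strategy of those references, though as written it is a roadmap that defers the substantive technical work (the parametrix construction, the uniform stationary-phase bookkeeping, and the off-diagonal Tauberian theorem) to the cited papers, exactly as the paper itself does.
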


Before giving a proof of the main theorem, 
we see a generalization of the following formula (cf. \cite{KS19B})
\begin{equation}
\frac{1}{(2\pi)^{\nn/2}} 
\int_{|\omega| < 1} e^{\ii (\eta,
 \omega)} d \omega = F_{\nn/2}(|\eta|),   
\label{eq:fourier-ball} 
\end{equation}
where $F_{\alpha}(t) = J_{\alpha}(t)/ t^{\alpha}$ for $\alpha >0$.
\begin{lem}\label{lem:fourier-sphere}
 Let $\nn = \dim M$. For $\eta \in T_p^*M$, 
\[
\frac{1}{(2\pi)^{\nn/2}} 
\int_{|\xi|_{g_p} < 1} e^{\ii \bra \eta,
 \xi\ket_{g_p}} \frac{d \xi}{\sqrt{\det g_p}}
= F_{\nn/2}(|\eta|_{g_p}).    
\]
\end{lem}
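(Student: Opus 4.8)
The plan is to reduce Lemma~\ref{lem:fourier-sphere} to the already-established formula~\eqref{eq:fourier-ball} by a linear change of variables that straightens out the metric $g_p$ at the point $p$. Since $g_p = (g_{ij}(p))_{i,j=1}^{\nn}$ is a symmetric positive-definite matrix, there exists an invertible matrix $A$ with $A^\top A = g_p$ (for instance the symmetric square root, $A = g_p^{1/2}$). The idea is that the inner product $\bra \cdot, \cdot \ket_{g_p}$ and the norm $|\cdot|_{g_p}$ become the standard Euclidean ones after applying $A$, so the anisotropic integral over the ellipsoid $\{|\xi|_{g_p} < 1\}$ with the weighted measure $\dd\xi/\sqrt{\det g_p}$ transforms into the isotropic integral over the unit Euclidean ball appearing in~\eqref{eq:fourier-ball}.

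First I would make the substitution $\omega = A\xi$ (equivalently $\xi = A^{-1}\omega$), so that $\dd\xi = (\det A)^{-1}\,\dd\omega = (\det g_p)^{-1/2}\,\dd\omega$. This cancels the Jacobian weight $1/\sqrt{\det g_p}$ against the change-of-variables factor, turning the weighted measure into plain Lebesgue measure $\dd\omega$. Next I would check that the quadratic form transforms correctly: since $|\xi|_{g_p}^2 = \xi^\top g_p \xi = (A\xi)^\top (A\xi) = |\omega|^2$ (using $g_p = A^\top A$), the integration domain $\{|\xi|_{g_p}<1\}$ becomes exactly the standard unit ball $\{|\omega|<1\}$.

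The only remaining point is the phase. I would write $\bra \eta, \xi \ket_{g_p} = \eta^\top g_p \xi = (A\eta)^\top (A\xi) = (A\eta, \omega)$, where $(\cdot,\cdot)$ is the standard Euclidean inner product on $\R^{\nn}$. Setting $\tilde\eta = A\eta$, the integral becomes $\frac{1}{(2\pi)^{\nn/2}}\int_{|\omega|<1} e^{\ii(\tilde\eta,\omega)}\,\dd\omega$, which by~\eqref{eq:fourier-ball} equals $F_{\nn/2}(|\tilde\eta|)$. Finally, $|\tilde\eta|^2 = (A\eta)^\top(A\eta) = \eta^\top g_p \eta = |\eta|_{g_p}^2$, so $|\tilde\eta| = |\eta|_{g_p}$, giving the claimed value $F_{\nn/2}(|\eta|_{g_p})$.

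I do not expect a serious obstacle here; the lemma is essentially a coordinate-change bookkeeping exercise. The one thing to be careful about is the consistent placement of $g_p$ versus its inverse $g_p^{-1}=(g^{ij}(p))$: the pairing $\bra\eta,\xi\ket_{g_p}$ on the cotangent space is given by the inverse metric in index notation, so I would verify at the outset which matrix the symbol $\bra\cdot,\cdot\ket_{g_p}$ denotes in the authors' convention and choose $A$ accordingly (so that the substitution simultaneously normalizes the domain, the weight, and the phase). Once that convention is fixed, the three transformations above compose into the result with no analytic input beyond~\eqref{eq:fourier-ball}.
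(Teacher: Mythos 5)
Your strategy---reduce to \eqref{eq:fourier-ball} by a linear substitution that simultaneously normalizes the domain, the weight, and the phase---is exactly the paper's proof. However, the concrete computation in your main text uses the wrong matrix, and the error is not merely notational. The paper's convention is stated explicitly in the introduction: the cotangent norm is given by the \emph{inverse} matrix, $|\xi|_{g_p}^2 = \sum_{i,j} g^{ij}(p)\,\xi_i\xi_j = \xi^\top g_p^{-1}\xi$, and likewise $\bra \eta,\xi\ket_{g_p} = \eta^\top g_p^{-1}\xi$. With your choice $A^\top A = g_p$ (i.e.\ $A = g_p^{1/2}$) and $\omega = A\xi$, the domain $\{|\xi|_{g_p}<1\}$ does \emph{not} become the unit ball in $\omega$. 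Moreover, even granting your tangent-space convention $|\xi|_{g_p}^2 = \xi^\top g_p \xi$, the Jacobian bookkeeping fails: you correctly write $d\xi = (\det g_p)^{-1/2}\,d\omega$, but this factor \emph{multiplies} the weight $(\det g_p)^{-1/2}$ rather than cancelling it, leaving $(\det g_p)^{-1}\,d\omega$. Under your convention the left-hand side of the lemma would evaluate to $(\det g_p)^{-1}F_{\nn/2}(|\eta|_{g_p})$, i.e.\ the statement itself would be false; the weight $d\xi/\sqrt{\det g_p}$ in the lemma is tailored precisely to the inverse-metric convention.

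Both problems disappear simultaneously with the choice the paper makes: set $\omega = g_p^{-1/2}\xi$, where $g_p^{-1/2}$ is the positive definite square root of $g_p^{-1}$. Then $\xi = g_p^{1/2}\omega$ gives $d\xi = \sqrt{\det g_p}\,d\omega$, so the weight cancels; $|\xi|_{g_p} = |g_p^{-1/2}\xi| = |\omega|$, so the domain becomes the unit ball; and $\bra\eta,\xi\ket_{g_p} = (g_p^{-1/2}\eta,\omega)$ with $|g_p^{-1/2}\eta| = |\eta|_{g_p}$, so \eqref{eq:fourier-ball} yields $F_{\nn/2}(|\eta|_{g_p})$. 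To your credit, your closing paragraph flags exactly this convention issue and says you would choose $A$ accordingly---that adjusted argument is verbatim the paper's proof---but as written, the displayed computation is the one instance of the plan that does not work.
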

\begin{proof}
We note that 
\[
 \bra \eta, \xi \ket_{g_p} = (g_p^{-1/2} \eta, 
g_p^{-1/2} \xi),  
\]
where $g_p^{-1/2}$ is the positive definite square root of the inverse matrix $g_p^{-1}$. In 
particular, $|\eta|_{g_p} = |g_p^{-1/2} \eta|$.  
From \eqref{eq:fourier-ball}, by change of variables $\omega = g_p^{-1/2} \xi$, 
we have 
\begin{align*}
F_{\nn/2}(|\eta|_{g_p}) 
&= \frac{1}{(2\pi)^{\nn/2}}
\int_{|\omega| < 1} e^{\ii (g_p^{-1/2}\eta, \omega)}
 d\omega \\
&= \frac{1}{(2\pi)^{\nn/2}}
\int_{|\xi|_{g_p} < 1} e^{\ii \bra \eta,
 \xi \ket_{g_p}} \frac{d \xi}{\sqrt{\det g_p}}. 
\end{align*}
We obtain the assertion. 
\end{proof}

\begin{rem} We have a similar formula 
\[
\frac{1}{(2\pi)^{\nn/2}} 
\int_{|\xi|_{g_p} = 1} e^{\ii \bra \eta,
 \xi\ket_{g_p}} \frac{d \xi}{\sqrt{\det g_p}}
= F_{(\nn-2)/2}(|\eta|_{g_p}). 
\]
\end{rem}
We need one more fact for the local behavior of the Riemannian
distance function. 

\begin{lem}\label{lem:limit} 
For $u, v \in T_p^*M$, let $c_1$ and $c_2$ be $C^1$ curves in $M$
 such that $c_1(0)=c_2(0)=p$, $c_1'(0)=u$ and
 $c_2'(0)=v$. Then,  
\[
\lim_{t \to 0+} \frac{d_g(c_1(t), c_2(t))}{t} =
 |u-v|_{g_p}. 
\] 
\end{lem}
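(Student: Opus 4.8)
The plan is to reduce the statement to a comparison between $d_g$ and the constant metric $g_p$ inside a single coordinate chart and to handle the resulting upper and lower bounds separately. Throughout I treat $u,v$ as tangent vectors at $p$ via the metric identification $T_pM\cong T_p^*M$, so that $|u-v|_{g_p}$ is the $g_p$-length of the velocity difference. First I would pass to geodesic normal coordinates centred at $p$: for $t$ small enough that $c_i(t)\in\cB_{p,\eps}$, set $\tilde c_i(t):=\exp_p^{-1}(c_i(t))\in T_pM$. Each $\tilde c_i$ is $C^1$ with $\tilde c_i(0)=0$, and since $(d\exp_p)_0=\mathrm{id}$ we get $\tilde c_1'(0)=u$, $\tilde c_2'(0)=v$, hence $\tilde c_1(t)=tu+o(t)$ and $\tilde c_2(t)=tv+o(t)$. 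In these coordinates $g_{ij}(0)=g_{ij}(p)$, so by continuity of the metric, for every $\delta>0$ there is a ball $B$ about the origin on which $(1-\delta)g_p\le g_z\le(1+\delta)g_p$ as quadratic forms for all $z\in B$.

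For the upper bound I would use the straight coordinate segment $\gamma(s)=\tilde c_2(t)+s(\tilde c_1(t)-\tilde c_2(t))$, $s\in[0,1]$, as a competitor path. Its image lies in $B$ for small $t$, and $\dot\gamma=\tilde c_1(t)-\tilde c_2(t)=t(u-v)+o(t)$, so its $g$-length is at most $\sqrt{1+\delta}\,|u-v|_{g_p}\,t+o(t)$. Since $d_g$ is the infimum of lengths, this gives $\limsup_{t\to 0+} d_g(c_1(t),c_2(t))/t\le\sqrt{1+\delta}\,|u-v|_{g_p}$.

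For the lower bound I would compare a minimizing path to the constant-metric distance, on which straight lines are minimizers, so that the constant-metric distance between $\tilde c_1(t)$ and $\tilde c_2(t)$ equals $|\tilde c_1(t)-\tilde c_2(t)|_{g_p}=|u-v|_{g_p}\,t+o(t)$. The bound $g_z\ge(1-\delta)g_p$ then yields $d_g(c_1(t),c_2(t))\ge\sqrt{1-\delta}\,|u-v|_{g_p}\,t+o(t)$, hence $\liminf_{t\to 0+} d_g(c_1(t),c_2(t))/t\ge\sqrt{1-\delta}\,|u-v|_{g_p}$. Letting $\delta\to0$ in the two estimates gives the claimed limit.

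The hard part will be the lower bound, and specifically the justification that the global distance $d_g$ is actually realized by paths staying inside $B$, where the quadratic-form comparison is valid; a priori a competing path could leave $B$. I would rule this out by working within the convexity radius of $(M,g)$, which is positive since $M$ is compact: for $t$ small the two points are $O(t)$-close to $p$, so any minimizing geodesic between them stays in $B$, while paths leaving $B$ are uniformly longer and cannot compete. As an alternative that sharpens the error control, one may invoke the Gauss lemma in normal coordinates to improve the comparison to $g_z=g_p+O(|z|^2)$, so that $\delta$ can be taken to be $O(t^2)$ and the correction terms are manifestly $o(t)$; this trades the convexity-radius discussion for slightly more Riemannian geometry.
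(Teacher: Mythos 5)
Your proof is correct, but it takes a genuinely different route from the paper: the paper offers no argument at all for this lemma, simply pointing to Corollary 3.1 of \cite{CFL}. Your argument is the standard self-contained comparison proof: transfer everything to $T_pM$ by $\exp_p^{-1}$ (so that $\tilde c_i(t)= t\,c_i'(0)+o(t)$ because $(d\exp_p)_0=\mathrm{id}$), use continuity of the pulled-back metric to get the two-sided quadratic-form comparison $(1-\delta)g_p\le g_z\le(1+\delta)g_p$ on a small coordinate ball $B$, prove the upper bound with the straight-segment competitor, prove the lower bound by reducing to paths that stay in $B$, and finally let $\delta\to 0$. The delicate reduction step is handled correctly: a path exiting $B$ has length bounded below by a positive constant independent of $t$, while your upper bound already gives $d_g(c_1(t),c_2(t))=O(t)$, so exiting paths cannot be near-minimizing for small $t$. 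As for what each approach buys: the citation keeps the paper compact, whereas your argument makes the lemma self-contained and exposes how little it needs --- only continuity of $g$ and differentiability of the curves at $t=0$ --- so it would hold, for instance, for merely continuous Riemannian metrics. You also dealt explicitly with the tangent--cotangent identification that the paper's statement leaves implicit.

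One caveat on your closing aside: sharpening the comparison to $g_z=g_p+O(|z|_{g_p}^2)$ via normal coordinates does \emph{not} let you drop the convexity-radius/exit-length discussion. However sharp the pointwise comparison is, it applies only to paths that remain in the region where it holds, so the argument that near-minimizing paths cannot leave that region is needed in either version. Since your main proof does include that argument, this does not affect correctness; just do not present the two devices as interchangeable.
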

\begin{proof}
See Corollary 3.1 in \cite{CFL} for instance. 
\end{proof}

Now we are in a position to give a proof of the main theorem. 
\begin{proof}[Proof of Theorem~\ref{thm:main}]
It suffices to show that the DPP associate with 
$\Klapeps(u,v)$ and $\la^m \phi_{\la}^* \vol_g$ converges as
 $\la \to \infty$. 
Suppose $d_g(x,y)$ is small enough. 
First we note that there exists $\zeta \in T_y^*M$ such that 
$|\zeta|_{g_y}=1$ and $\exp_y^{-1}(x) = d_g(x,y) \zeta$. 
By using Lemma~\ref{lem:fourier-sphere}, 
we see that 
\begin{align*}
& \frac{1}{(2\pi)^{\nn/2}} \int_{|\xi|_{g_y} < 1} 
e^{\ii \bra\exp_y^{-1}(x), \xi\ket_{g_y}}
\frac{\dd\xi}{\sqrt{\det g_y}}
=F_{\nn/2}(d_g(x,y)). 
\end{align*}
From Lemma~\ref{lem:DPP} with \eqref{eq:Klambda}, 
\eqref{eq:integral} with \eqref{eq:phase-function}, 
Theorem~\ref{thm:estimate} and
 Lemma~\ref{lem:fourier-sphere}, 
as $\la \to \infty$, we have 
\begin{align*}
\Klapeps(u,v) 
&= \frac{1}{(2\pi)^{\nn}} \int_{|\xi|_{g_{V_{\la}}} < 1} e^{\ii \la \bra
 \exp_{V_{\la}}^{-1}(U_{\la}),  
\xi \ket_{g_{V_{\la}}}} \frac{\dd\xi}{\sqrt{\det g_{V_\la}}} 
\trivial_{\cB_{p,\eps}}(U_{\la}) \trivial_{\cB_{p,\eps}}(V_{\la}) + O(\la^{-1})\\
&= \frac{1}{(2\pi)^{\nn/2}} 
F_{\nn/2}(\la d_g(U_{\la}, V_{\la})) 
\trivial_{\cB_{p,\eps}}(U_{\la})
 \trivial_{\cB_{p,\eps}}(V_{\la}) + O(\la^{-1}). 
\end{align*}
We note that $\lim_{t \to 0} F_{\alpha}(t) =
 2^{-\alpha} \Gamma(\alpha+1)^{-1}$ and so $F_{\alpha}(t)$ is a bounded
 continuous function on $\R$.  
Since  $\la d_g(U_{\la}, V_{\la}) \to |u-v|_{g_p}$ by 
Lemma~\ref{lem:limit} 
and $\trivial_{\cB_{p,\eps}}(U_{\la}) \trivial_{\cB_{p,\eps}}(V_{\la})$ is
 equal to $1$ for any sufficiently large $\la$, we have 
\begin{align*}
\Klapeps(u,v)
&\to \frac{1}{(2\pi)^{\nn/2}}
 F_{\nn/2}(|u-v|_{g_p})
\end{align*}
uniformly on compacts in $T_p^* M$. 
From the remark after Lemma~\ref{lem:DPP}, 
the reference measure is the Riemannian measure with
 respect to $(\phi_1^* g)_{\cdot /\la}$ and the
 Radon-Nikodym derivative relative to 
the Riemannian measure with respect to 
$(\phi_1^* g)_0$ is uniformly close to $1$ on any compact set as $\la \to \infty$. 
Therefore, it follows from Lemma~\ref{lem:weakconv} 
that the scaled point process 
$\Xilapeps$ converges weakly to the DPP associated with the kernel 
$K_{g_p}^{(\nn)}(u,v)$ given by \eqref{eq:kernelKgp} and the
 reference measure given by the Riemannian measure with respect to
the constant metric $(\phi_1^* g)_0$. The proof is completed. 
\end{proof}

\section{Concluding remarks}
We have seen the local universality of DPPs on Riemannian
manifolds. From this discussion, we came to several other
questions. 

\begin{enumerate}
\item What is the universality when we consider the
      Heisenberg case in Zelditch's terminology? 
      We only discussed the Euclidean case in this article. 
      One can expect that the Bergman kernel is
      involved as in \cite{BSZ00, Zel00, Zel09, KS19A}.  

\item We dealt with Laplace-Beltrami operators 
      corresponding to the principal symbol $|\xi|_{g_y}$. 
      What is the local universality result when we consider more general
      DPPs associated with the spectral  projections of
      elliptic differential operators possibly with potentials?  

\item In this paper we have considered a point process 
$\Xi_{\lambda, p, \epsilon}$ on
$T^*_p M$ at each `point' $p \in M$.
We expect that the collection 
$\Xi_{\lambda, \epsilon}= \{ \Xi_{\lambda, p, \epsilon} \}_{p \in M}$
will be regarded as a `random field' 
on the cotangent bundle $T^* M=\{T_p^* M\}_{p \in M}$. 
Theorem 1 determines the limit
$\Xi_{\lambda, p, \epsilon} \to \Xi_p$ in $\lambda \to \infty$.
How can we describe the limiting random field $\Xi_{\lambda, \epsilon} \to \Xi$ in
$\lambda \to \infty$?

\end{enumerate}

\vskip 1cm
{\bf Acknowledgments.}
 This research was supported by JSPS KAKENHI
Grant Number 18H01124 and 19K03674, and also partially supported 
by JSPS KAKENHI Grant Number 16H06338, 20H00119, 20K20884,
and 21H04432.  

%%%%%%%%%%%%%%%%%%%%%%%%%%%%%%%%%
% References
%%%%%%%%%%%%%%%%%%%%%%%%%%%%%%%%%

\end{document}